\theoremstyle{plain}
\newtheorem{definition}{Definition}
\newtheorem{proposition}{Proposition}
\newtheorem{theorem}{Theorem}
\numberwithin{equation}{section}
\begin{document}
\title[Approximately CM complexes]{Equivalent condition for approximately
Cohen-Macaulay complexes}
\author{Micha\l\ Laso\'{n}}
\address{Institute of Mathematics of the Polish Academy of Sciences, \'{S}%
niadeckich 8, 00-956 Warszawa, Poland}
\address{Theoretical Computer Science Department, Faculty of Mathematics and
Computer Science, Jagiellonian University, 30-348 Krak\'{o}w, Poland}
\email{michalason@gmail.com}
\keywords{approximately Cohen-Macaulay ring, Stanley-Reisner ring, Alexander dual complex, sequentially Cohen-Macaulay ring}

\begin{abstract}
We give a necessary and sufficient condition for a simplicial complex to be
approximately Cohen-Macaulay. Namely it is approximately Cohen-Macaulay if
and only if the ideal associated to its Alexander dual is componentwise
linear and generated in two consecutive degrees. This completes the result
of Herzog and Hibi who proved that a simplicial complex is
sequentially Cohen-Macaulay if and only if the ideal associated to its
Alexander dual is componentwise linear.
\end{abstract}

\maketitle

\section{Introduction}

In \cite{eare} Eagon and Reiner proved that a simplicial complex is
Cohen-Macaulay if and only if the ideal associated to its Alexander dual has
linear resolution. Later Herzog and Hibi \cite{hehi} generalized it
and proved that a simplicial complex is sequentially Cohen-Macaulay if and
only if the ideal associated to its Alexander dual is componentwise linear.
We use their result to give a similar equivalent condition for a simplicial
complex to be approximately Cohen-Macaulay.

We begin with a brief introduction to the topic. When we say that a
simplicial complex is Cohen-Macaulay, sequentially Cohen-Macaulay, or
approximately Cohen-Macaulay, we always think that its Stanley-Reisner ring
has this property.

\begin{definition}
For a simplicial complex $\Delta $ on the set of vertices $\{1,\dots ,n\}$
and a field $\mathbb{K}$, the \emph{Stanley-Reisner ring} (or \emph{face ring%
}) is the ring $\mathbb{K}[x_{1},\dots ,x_{n}]/I_{\Delta}=\mathbb{K}[\Delta ]$, where 
$I_{\Delta}$ is generated by all squarefree monomials $x_{i_{1}}\cdots x_{i_{l}}$ for
which \newline $\{i_{1},\dots ,i_{l}\}$ is not a face in $\Delta $.
\end{definition}

We recall combinatorial description of Cohen-Macaulay complexes:

\begin{definition}
Let $\sigma $ be a simplex in a simplicial complex $\Delta $. The \emph{link}
of $\sigma $ in $\Delta $, denoted by $lk_{\Delta }\sigma $, is the
simplicial complex $\{\tau \in \Delta :\tau \cap \sigma =\emptyset \;\text{%
and}\;\tau \cup \sigma \in \Delta \}$.
\end{definition}

\begin{theorem}
\label{reisner} \emph{(Reisner \cite{re})} Let $R=\mathbb{K}[\Delta ]$ be
the face ring of $\Delta $. Then the following conditions are equivalent:

\begin{enumerate}
\item $R$ is Cohen-Macaulay ring.

\item $\tilde{H}_{i}(lk_{\Delta }\sigma )=0$ if $i<\dim (lk_{\Delta }\sigma )
$ for all simplices $\sigma \in \Delta $.
\end{enumerate}
\end{theorem}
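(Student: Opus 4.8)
The plan is to route through commutative algebra using local cohomology and then convert the resulting algebraic vanishing into the stated combinatorial condition by means of Hochster's formula. Write $R=\mathbb{K}[\Delta]$ and let $d=\dim R=\dim\Delta+1$ be its Krull dimension. The starting point is the Grothendieck (non)vanishing theorem together with the depth characterization of the Cohen--Macaulay property: $R$ is Cohen--Macaulay if and only if the local cohomology modules with respect to the graded maximal ideal $\mathfrak{m}=(x_1,\dots,x_n)$ satisfy $H^i_{\mathfrak{m}}(R)=0$ for all $i<d$ (one always has $H^i_{\mathfrak{m}}(R)=0$ for $i>d$ and $H^d_{\mathfrak{m}}(R)\neq 0$, and the depth of $R$ is the least $i$ with $H^i_{\mathfrak{m}}(R)\neq 0$). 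Thus the task reduces to computing these modules combinatorially.

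For that I would invoke Hochster's formula, the $\mathbb{Z}^n$-graded computation expressing $H^i_{\mathfrak{m}}(R)$ through reduced simplicial homology of links: in each homological degree $i$ the module $H^i_{\mathfrak{m}}(R)$ vanishes if and only if $\tilde{H}_{i-|\sigma|-1}(lk_\Delta\sigma;\mathbb{K})=0$ for every face $\sigma\in\Delta$. Feeding this into the criterion above and setting $j=i-|\sigma|-1$, the condition ``$H^i_{\mathfrak{m}}(R)=0$ for all $i<d$'' becomes: for every $\sigma\in\Delta$ one has $\tilde{H}_j(lk_\Delta\sigma;\mathbb{K})=0$ for all $j<\dim\Delta-|\sigma|$. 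Call this condition (H); by the above it is literally equivalent to $R$ being Cohen--Macaulay.

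It remains to reconcile (H) with Reisner's condition (R), that $\tilde{H}_j(lk_\Delta\sigma)=0$ for $j<\dim(lk_\Delta\sigma)$. Since each facet $G\supseteq\sigma$ contributes the face $G\setminus\sigma$ to $lk_\Delta\sigma$, one always has $\dim(lk_\Delta\sigma)\le\dim\Delta-|\sigma|$, so the vanishing range in (R) sits inside that of (H); hence (H) implies (R) formally, which already yields the direction ``Cohen--Macaulay $\Rightarrow$ (R)'' with no further work. For the converse I would first show that (R) forces $\Delta$ to be pure, so that $\dim(lk_\Delta\sigma)=\dim\Delta-|\sigma|$ for every $\sigma$ and (R) upgrades to (H). Purity I would obtain by induction on $\dim\Delta$: the case $\sigma=\emptyset$ gives $\tilde{H}_0(\Delta)=0$, so $\Delta$ is connected; each $lk_\Delta v$ again satisfies (R), because links of links are links, so by induction it is pure, which forces all facets through a fixed vertex to share a single dimension; connectedness then propagates this common dimension across all vertices along edges, so every facet of $\Delta$ has the same dimension.

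The main obstacle is Hochster's formula itself: the genuine content here is the local cohomology computation, which one proves either by analysing the $\mathbb{Z}^n$-graded \v{C}ech (equivalently Koszul) complex of $R$ degree by degree, or dually via local duality and the structure of the minimal free resolution of $I_\Delta$ (a more topological alternative, following Munkres, would instead establish that Cohen--Macaulayness depends only on the homeomorphism type of $|\Delta|$, but the Hochster route is cleaner). Granting that formula, the remaining steps are the elementary reindexing and the purity lemma sketched above; the only delicate point is the bookkeeping at the boundary degree $j=-1$, where $\tilde{H}_{-1}(lk_\Delta\sigma)\neq 0$ precisely when $\sigma$ is a facet, which is exactly the term that detects and rules out non-purity.
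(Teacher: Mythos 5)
The paper does not prove this statement at all: Theorem \ref{reisner} is quoted from Reisner \cite{re} and used as a black box in the sequel, so there is no internal proof to compare yours against; the comparison can only be with the literature. Judged on its own, your sketch follows the standard modern route to Reisner's criterion --- the local-cohomology characterization of depth plus Hochster's formula --- and the parts you actually carry out are correct: the reindexing $j=i-|\sigma|-1$, which turns ``$H^i_{\mathfrak{m}}(R)=0$ for $i<d$'' into ``$\tilde{H}_j(lk_\Delta\sigma;\mathbb{K})=0$ for $j<\dim\Delta-|\sigma|$'', is right; the inequality $\dim(lk_\Delta\sigma)\le\dim\Delta-|\sigma|$ does give the implication from Cohen--Macaulayness to condition (2) with no further work; and your induction showing that condition (2) forces purity (connectedness from $\tilde{H}_0(\Delta)=0$, the identity $lk_{lk_\Delta v}\tau=lk_\Delta(\tau\cup\{v\})$, propagation of the common facet dimension along edges of the connected $1$-skeleton) is sound, including the boundary observation that $\tilde{H}_{-1}(lk_\Delta\sigma)\neq 0$ exactly when $\sigma$ is a facet, which is what makes the vanishing condition detect non-purity. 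Two caveats. First, essentially all of the genuine content sits in Hochster's formula, which you invoke rather than prove; as a self-contained argument the proposal is therefore a reduction, not a proof, though this is the accepted division of labor in textbook treatments and is close to Reisner's own route, which also runs through local cohomology. Second, since Cohen--Macaulayness of $\mathbb{K}[\Delta]$ genuinely depends on the field, the reduced homology in condition (2) must be taken with coefficients in $\mathbb{K}$ throughout; you do this consistently, whereas the paper's statement leaves the coefficients implicit, so it is worth saying explicitly that the equivalence is field-by-field.
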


For some techniques of counting homology we refer the reader to Section 3.2
of \cite{lami}. We need also the following definitions:

\begin{definition}\emph{\cite{go}}
A non Cohen-Macaulay local ring $A$ is called \emph{approximately
Cohen-Macaulay} if there is an element $a$ in the maximal ideal such that $%
A/(a^{n})$ is Cohen-Macaulay ring of dimension $\dim (A)-1$ for all $n>0$.
\end{definition}

\begin{definition}
A ring $A$ of dimension $d$ is called \emph{sequentially Cohen-Macaulay} if
there exists a filtration of ideals of $A$: 
\begin{equation*}
0=D_{0}\subset D_{1}\subset \dots \subset D_{t}=A
\end{equation*}%
such that each $D_{i}/D_{i-1}$ is Cohen-Macaulay and 
\begin{equation*}
0<\dim (D_{1}/D_{0})<\dim (D_{2}/D_{1})<\dots <\dim (D_{t}/D_{t-1})=d.
\end{equation*}
\end{definition}

\begin{definition}
Let $\Delta $ be a simplicial complex on the set of vertices $V$, we define
its \emph{Alexander dual} to be $\Delta ^{\ast }=\{V\setminus \sigma :\sigma
\notin \Delta \}$.
\end{definition}

\begin{definition}
We say that a graded ideal $I\subset A$ is \emph{componentwise linear} if $%
I_{j}$ has linear resolutions for each degree $j$.
\end{definition}

There is a nice description of approximately Cohen-Macaulay rings:

\begin{proposition}
\label{approx}\emph{\cite{cucu}} Let $A$ be a non Cohen-Macaulay local ring
of dimension $d$. Then the following conditions are equivalent:

\begin{enumerate}
\item $A$ is an approximately Cohen-Macaulay ring.

\item $A$ is a sequentially Cohen-Macaulay ring with filtration $%
0=D_{0}\subset D_{1}\subset D_{2}=A$, where $\dim (D_{1})=d-1$.
\end{enumerate}
\end{proposition}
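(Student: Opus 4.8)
The plan is to translate both conditions into the language of depth and local cohomology, using Grothendieck's criterion that a finitely generated module $M$ of dimension $e$ over the local ring $(A,\mathfrak{m})$ is Cohen--Macaulay precisely when $H^i_{\mathfrak{m}}(M)=0$ for all $i<e$. Two exact sequences would drive everything: the filtration sequence $0\to D_1\to A\to A/D_1\to 0$, and the multiplication sequence obtained from $A\xrightarrow{a^n} A$, to which I would apply the snake lemma. Throughout I would record that, since $V(a)=V(a^n)$, one has $\dim A/(a)=\dim A/(a^n)$, and that a power of a non-zero-divisor is again a non-zero-divisor.

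For the implication $(2)\Rightarrow(1)$ I would proceed as follows. Given the filtration with $D_1$ Cohen--Macaulay of dimension $d-1$ and $A/D_1$ Cohen--Macaulay of dimension $d$, I want a single element $a\in\mathfrak{m}$ annihilating $D_1$ and regular on $A/D_1$. Since $\dim A/\operatorname{Ann}(D_1)=d-1$, the ideal $\operatorname{Ann}(D_1)$ lies in no associated prime of $A/D_1$ (all of which have dimension $d$), so prime avoidance produces such an $a$. Then $a^nD_1=0$ for every $n$, and one checks $D_1\cap a^nA=0$ using regularity of $a$ on $A/D_1$; the snake lemma applied to multiplication by $a^n$ on the filtration sequence then yields $0\to D_1\to A/(a^n)\to (A/D_1)/(a^n)\to 0$. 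Here $D_1$ is Cohen--Macaulay of dimension $d-1$ and $(A/D_1)/(a^n)$ is Cohen--Macaulay of dimension $d-1$ (a non-zero-divisor modulo a Cohen--Macaulay module), so the long exact sequence in local cohomology forces $H^i_{\mathfrak{m}}(A/(a^n))=0$ for $i<d-1$. As this holds for all $n$ and $A$ is not Cohen--Macaulay, $A$ is approximately Cohen--Macaulay.

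For $(1)\Rightarrow(2)$ I would set $D_1:=0:_A a^\infty=H^0_{(a)}(A)$, the $a$-power-torsion of $A$. Several structural facts come out cleanly. First, $\operatorname{Supp}(D_1)\subseteq V(a)$, so $\dim D_1\le \dim A/(a)=d-1$, whence $\dim A/D_1=d$ and $a$ is a non-zero-divisor on $A/D_1$. Second, $D_1\ne 0$: if $a$ were regular on $A$, the sequence $0\to A\xrightarrow{a^n}A\to A/(a^n)\to 0$, together with the fact that every element of $H^i_{\mathfrak{m}}(A)$ is killed by a power of $a$, would force $H^i_{\mathfrak{m}}(A)=0$ for all $i\le d-1$, i.e.\ $A$ Cohen--Macaulay, contrary to hypothesis. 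Third, for $n\gg 0$ one has $0:_A a^n=D_1$ and $D_1\cap a^nA=0$, giving again $0\to D_1\to A/(a^n)\to (A/D_1)/(a^n)\to 0$; since $A/(a^n)$ is Cohen--Macaulay, hence unmixed of dimension $d-1$, the submodule $D_1$ has all associated primes of dimension $d-1$, so $\dim D_1=d-1$. It then remains to prove that the two pieces $D_1$ and $A/D_1$ are themselves Cohen--Macaulay.

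This last step is where I expect the main difficulty. A single instance of the long exact sequence only links the depths of $D_1$ and $(A/D_1)/(a^n)$, showing they differ by one, and does not by itself pin them to the top. The leverage must come from the fact that $A/(a^n)$ is Cohen--Macaulay for every $n$ simultaneously: I would pass to the inverse system $\{A/(a^n)\}_n$, whose transition maps restrict to the identity on the constant submodule $D_1$, obtaining $0\to D_1\to \widehat{A}^a\to \widehat{(A/D_1)}^a\to 0$ for the $a$-adic completions, and combine this with a filter-regular and Artin--Rees analysis of $a$ on $A$ to conclude that $\operatorname{depth}(A/D_1)=d$ and that $D_1$ is Cohen--Macaulay of dimension $d-1$. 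Once both pieces are Cohen--Macaulay, $0=D_0\subset D_1\subset D_2=A$ is the desired sequentially Cohen--Macaulay filtration with $\dim D_1=d-1$, which completes the equivalence; the degenerate low-dimensional cases, where $\dim D_1=0$, would be treated separately and directly.
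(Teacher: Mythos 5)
First, a point of comparison: the paper does not prove this proposition at all --- it is quoted from Cuong--Cuong \cite{cucu} (the underlying result being Goto's \cite{go}) --- so your attempt has to be judged against what a complete proof requires, not against an argument in the text. Your direction $(2)\Rightarrow(1)$ is correct and complete: since $A/D_{1}$ is Cohen--Macaulay of dimension $d$ it is unmixed, an associated prime $\mathfrak{p}$ of $A/D_{1}$ containing $\operatorname{Ann}(D_{1})$ would force $d=\dim A/\mathfrak{p}\le\dim D_{1}=d-1$, so prime avoidance yields $a\in\operatorname{Ann}(D_{1})\subseteq\mathfrak{m}$ regular on $A/D_{1}$; the snake lemma then gives $0\to D_{1}\to A/(a^{n})\to (A/D_{1})/(a^{n})\to 0$ for every $n$, and the local cohomology long exact sequence pins $\operatorname{depth}A/(a^{n})\ge d-1=\dim A/(a^{n})$. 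The preliminary part of $(1)\Rightarrow(2)$ --- taking $D_{1}=H^{0}_{(a)}(A)$, showing $D_{1}\ne 0$, $\dim D_{1}=d-1$, $a$ regular on $A/D_{1}$, and the same short exact sequence for $n\gg 0$ via Artin--Rees --- is also sound.

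However, $(1)\Rightarrow(2)$ has a genuine gap exactly at the step you yourself flag as ``the main difficulty,'' and what you offer there is a plan, not a proof. Write $B=A/D_{1}$, $s=\operatorname{depth}B$, $t=\operatorname{depth}D_{1}$. Applying the depth lemma to $0\to D_{1}\to A/(a^{n})\to B/a^{n}B\to 0$, with $\operatorname{depth}B/a^{n}B=s-1$ and $\operatorname{depth}A/(a^{n})=d-1$, gives only a dichotomy: either $s=d$ (and then $t=d-1$, so both pieces are Cohen--Macaulay and you are done), or $t=s\le d-1$. Eliminating the second alternative is the entire content of the theorem; it is where the hypothesis ``for all $n$'' must be used globally, and your proposed remedy does not do this. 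The inverse-limit sequence $0\to D_{1}\to \widehat{A}^{a}\to \widehat{B}^{a}\to 0$ carries no new depth information beyond the finite-level sequences you already have (indeed $\widehat{A}^{a}/a^{n}\widehat{A}^{a}\cong A/(a^{n})$), and ``a filter-regular and Artin--Rees analysis'' is a declaration of intent, not an argument. To gauge the work actually needed: in the bad case with $s\le d-2$ one can splice the two long exact sequences to get $\bigl(0:_{H^{s}_{\mathfrak{m}}(B)}a^{n}\bigr)\cong H^{s}_{\mathfrak{m}}(D_{1})$ for all $n\ge n_{0}$, Matlis-dualize to finitely generated modules over the completion, and use the fact that a surjective endomorphism of a Noetherian module is injective together with Nakayama to conclude $a^{n_{0}}H^{s}_{\mathfrak{m}}(B)=0$; yet even this handles neither the case $s=d-1$ nor the final contradiction. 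As it stands, your proposal proves the easy implication and, for the hard one, restates the cited theorem rather than proving it.
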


\section{Equivalent condition}

We will make use of the following result of Herzog and Hibi \emph{\cite{hehi}%
}.

\begin{theorem}
\label{seqen}\emph{\cite{hehi}} Let $\Delta $ be a simplicial complex. Then
Stanley-Reisner ring $\mathbb{K}[\Delta ]$ is sequentially Cohen-Macaulay if
and only if $I_{\Delta^{\ast } },$ the ideal associated to its Alexander dual,
is componentwise linear.
\end{theorem}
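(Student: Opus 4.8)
The plan is to reduce the statement to the Eagon--Reiner theorem \cite{eare} (the Cohen--Macaulay, i.e. single-degree, case) applied to every pure skeleton of $\Delta$, using two bridges: the combinatorial characterization of sequential Cohen--Macaulayness through skeleta, and an Alexander-duality identity matching the graded components of $I_{\Delta^*}$ with the dual ideals of those skeleta. Throughout, write $\Delta$ on the vertex set $\{1,\dots,n\}$, let $\Delta^{[i]}$ denote the pure $i$-skeleton (the subcomplex generated by the $i$-dimensional faces of $\Delta$), and for a graded ideal $I$ let $I_{\langle j\rangle}$ be the ideal generated by its degree-$j$ component, so that $I$ is componentwise linear exactly when each $I_{\langle j\rangle}$ has a linear resolution.

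First I would record the combinatorial input that $\Delta$ is sequentially Cohen--Macaulay if and only if $\Delta^{[i]}$ is Cohen--Macaulay for every $i$; this is the skeleton characterization of sequential Cohen--Macaulayness (Duval), which I would either cite or recover from the filtration definition together with Reisner's criterion (Theorem \ref{reisner}). The substance of the argument is then the following duality identity, which I would prove by unwinding the definition of the Alexander dual on monomials: for every $j$,
\[
(I_{\Delta^*})_{\langle j\rangle}=I_{(\Delta^{[\,n-j-1\,]})^{\ast}} .
\]
To establish this, note that a squarefree monomial $x_G$ lies in $I_{\Delta^*}$ precisely when the complement $\{1,\dots,n\}\setminus G$ is a face of $\Delta$; hence the degree-$j$ generators of $(I_{\Delta^*})_{\langle j\rangle}$ correspond to faces of $\Delta$ of dimension $n-j-1$, and a squarefree monomial $x_H$ lies in this ideal exactly when $\{1,\dots,n\}\setminus H$ is contained in such a face, i.e. is a face of $\Delta^{[\,n-j-1\,]}$. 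Since both sides are squarefree monomial ideals agreeing on all squarefree monomials, they coincide.

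With the identity in hand the equivalence chains together directly. By definition $I_{\Delta^*}$ is componentwise linear iff $(I_{\Delta^*})_{\langle j\rangle}$ has a linear resolution for all $j$; by the identity this says $I_{(\Delta^{[i]})^{\ast}}$ has a linear resolution for all $i$ (setting $i=n-j-1$, the degenerate degrees $j$ giving the zero or unit ideal being harmless); by Eagon--Reiner \cite{eare} this is equivalent to $\Delta^{[i]}$ being Cohen--Macaulay for all $i$; and by the skeleton characterization this is equivalent to $\Delta$ being sequentially Cohen--Macaulay. I expect the main obstacle to be the duality identity: one must track the degree shift $j\mapsto n-j-1$ carefully, check that it is the ideal generated by the entire degree-$j$ part (not merely the minimal generators) that dualizes to the full skeleton, and confirm that ``linear resolution'' in the single generating degree $j$ of $(I_{\Delta^*})_{\langle j\rangle}$ is exactly the hypothesis that Eagon--Reiner consumes.
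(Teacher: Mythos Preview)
The paper does not prove Theorem~\ref{seqen} at all; it is quoted from Herzog--Hibi \cite{hehi} and used as a black box in the proof of the main result. So there is no ``paper's own proof'' to compare against. Your outline is in fact the standard strategy from \cite{hehi} itself: reduce to pure skeleta via Duval's criterion \cite{du} and then invoke Eagon--Reiner \cite{eare} degree by degree.

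That said, the key identity you write down is not literally correct. You assert $(I_{\Delta^*})_{\langle j\rangle}=I_{(\Delta^{[n-j-1]})^{\ast}}$ and justify it by claiming both sides are squarefree monomial ideals. But $(I_{\Delta^*})_{\langle j\rangle}$ is in general \emph{not} squarefree: the degree-$j$ graded piece of $I_{\Delta^*}$ contains non-squarefree multiples of lower-degree generators (for instance, if $x_3\in I_{\Delta^*}$ then $x_3^{2}$ lies in $(I_{\Delta^*})_{\langle 2\rangle}$, yet $x_3^{2}$ need not belong to the Stanley--Reisner ideal on the right-hand side). The identity that actually holds uses the \emph{squarefree} component $I_{[j]}$, the ideal generated by the squarefree degree-$j$ monomials of $I$; one has $(I_{\Delta^*})_{[j]}=I_{(\Delta^{[n-j-1]})^{\ast}}$, and your monomial-by-monomial verification proves exactly this. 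To finish you then need the auxiliary fact, also established in \cite{hehi}, that a squarefree monomial ideal $I$ is componentwise linear if and only if every $I_{[j]}$ has a linear resolution. With that lemma inserted your chain of equivalences goes through; without it, the step ``$(I_{\Delta^*})_{\langle j\rangle}$ has a linear resolution $\Leftrightarrow$ $I_{(\Delta^{[n-j-1]})^{\ast}}$ has a linear resolution'' is unjustified.
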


Our theorem reads as follows.

\begin{theorem}
Let $\Delta $ be a simplicial complex. Then the Stanley-Reisner ring $%
\mathbb{K}[\Delta ]$ is approximately Cohen-Macaulay if and only if $%
I_{\Delta^{\ast } }$, ideal associated to its Alexander dual, is componentwise
linear and generated in two consecutive degrees.
\end{theorem}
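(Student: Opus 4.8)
The plan is to reduce everything to the two cited results, Theorem~\ref{seqen} (Herzog--Hibi) and Proposition~\ref{approx}, by translating the combinatorial data of $I_{\Delta^{\ast}}$ into the data of the dimension filtration of $\mathbb{K}[\Delta]$. The bridge is a single dictionary lemma: the minimal monomial generators of $I_{\Delta^{\ast}}$ are in a degree-controlled bijection with the facets of $\Delta$. Concretely, writing $n=|V|$, a set $\tau$ is a non-face of $\Delta^{\ast}$ precisely when $V\setminus\tau\in\Delta$, and $\tau$ is a \emph{minimal} non-face precisely when $V\setminus\tau$ is maximal, i.e.\ a facet; so $\tau\mapsto V\setminus\tau$ identifies minimal generators of $I_{\Delta^{\ast}}$ with facets of $\Delta$, the generator having degree $|\tau|=n-|V\setminus\tau|=n-\dim(V\setminus\tau)-1$. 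Thus the set of degrees in which $I_{\Delta^{\ast}}$ is minimally generated equals $\{\,n-\dim\sigma-1:\sigma\text{ a facet of }\Delta\,\}$, and $I_{\Delta^{\ast}}$ is generated in exactly two consecutive degrees if and only if $\Delta$ has facets of exactly two distinct dimensions and these are consecutive, say $d-1$ and $d-2$.

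First I would record the standard fact that the dimension filtration $0=D_{0}\subset\dots\subset D_{t}=\mathbb{K}[\Delta]$ has nonzero quotients $D_{i}/D_{i-1}$ whose Krull dimensions are exactly the distinct values $\dim\sigma+1$ taken over facets $\sigma$ of $\Delta$; this reflects that the associated primes of $\mathbb{K}[\Delta]$ are the monomial primes $P_{\sigma}$ attached to facets, with $\dim\bigl(\mathbb{K}[\Delta]/P_{\sigma}\bigr)=\dim\sigma+1$. When $\mathbb{K}[\Delta]$ is sequentially Cohen--Macaulay this dimension filtration is the one witnessing the property. Consequently the filtration has exactly two steps with $\dim D_{1}=d-1$ and $\dim D_{2}=d$ if and only if $\Delta$ has facets of exactly two distinct consecutive dimensions $d-1$ and $d-2$ --- precisely the condition isolated in the previous paragraph.

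With the dictionary in hand the two implications are short. For the forward direction, if $\mathbb{K}[\Delta]$ is approximately Cohen--Macaulay then it is non Cohen--Macaulay, and by Proposition~\ref{approx} it is sequentially Cohen--Macaulay with a two-step filtration satisfying $\dim D_{1}=d-1$. Theorem~\ref{seqen} then yields that $I_{\Delta^{\ast}}$ is componentwise linear, and the filtration translation forces $\Delta$ to have facets in exactly two consecutive dimensions, hence $I_{\Delta^{\ast}}$ to be generated in two consecutive degrees. For the converse, componentwise linearity gives sequential Cohen--Macaulayness by Theorem~\ref{seqen}; generation in two consecutive degrees forces two distinct facet dimensions, so $\Delta$ is not pure and $\mathbb{K}[\Delta]$ is therefore not Cohen--Macaulay, since Cohen--Macaulay complexes are pure. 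The dimension filtration then has exactly two steps with $\dim D_{1}=d-1$, and Proposition~\ref{approx} upgrades sequential Cohen--Macaulayness to approximate Cohen--Macaulayness.

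The main obstacle I anticipate is the middle step: verifying rigorously that for a sequentially Cohen--Macaulay Stanley--Reisner ring the filtration of Proposition~\ref{approx} may be taken to be the dimension filtration, and that its nonzero quotients see exactly the distinct facet dimensions with the expected Krull dimensions. One must check that the two-step filtration coming from the definition of approximate Cohen--Macaulayness genuinely corresponds to two \emph{distinct} facet dimensions rather than a coarser grouping, and that the consecutiveness condition $\dim D_{1}=d-1$ matches consecutiveness of degrees under the shift $j=n-\dim\sigma-1$. The degenerate possibility that all generators sit in a single degree --- which by Eagon--Reiner would make $I_{\Delta^{\ast}}$ have a linear resolution and $\Delta$ Cohen--Macaulay, contradicting non Cohen--Macaulayness --- should be addressed explicitly, so that ``two consecutive degrees'' really means both degrees occur.
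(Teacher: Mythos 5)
Your proposal is correct and follows essentially the same route as the paper: Proposition~\ref{approx} plus Theorem~\ref{seqen} reduce the problem to the existence of a two-step filtration with $\dim(D_{1})=d-1$, the dimension filtration fact you flag as the main obstacle is exactly what the paper resolves by citing the appendix of Bj\"{o}rner--Wachs--Welker (uniqueness of the filtration and identification of its steps with the distinct facet dimensions), and the Alexander duality dictionary between facets and minimal generators of $I_{\Delta^{\ast}}$ is the same final translation. If anything, your write-up is slightly more careful than the paper's, since you explicitly verify the non-Cohen--Macaulay hypothesis needed to invoke Proposition~\ref{approx} in the converse direction (via purity of Cohen--Macaulay complexes) and you rule out the degenerate case where all generators lie in a single degree.
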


\begin{proof}
By Proposition \ref{approx}, $\mathbb{K}[\Delta ]$ is approximately
Cohen-Macaulay if and only if $\mathbb{K}[\Delta ]$ is a sequentially
Cohen-Macaulay ring with filtration 
\begin{equation*}
0=D_{0}\subset D_{1}\subset D_{2}=\mathbb{K}[\Delta ],
\end{equation*}%
where $\dim (D_{1})=d-1$. Due to the Theorem \ref{seqen} of Herzog and Hibi
this is equivalent to componentwise linearity of $I_{\Delta ^{\ast } }$, and
existence of a filtration 
\begin{equation*}
0=D_{0}\subset D_{1}\subset D_{2}=\mathbb{K}[\Delta ],
\end{equation*}%
where $\dim (D_{1})=d-1$. From Appendix of \cite{bjwawe} we get that if such a
filtration exists, then it is unique and coincides with the one given by 
\begin{equation*}
0=M_{0}\subset \dots \subset M_{i-1}=I_{\Delta ,\Delta ^{\langle
j_{i}-1\rangle }}\subset \dots \subset \mathbb{K}[\Delta ],
\end{equation*}%
where $I_{\Delta ,\Delta ^{\langle j_{i}-1\rangle }}$ is the ideal in $%
\mathbb{K}[\Delta ]$ generated by monomials $x_{A}$, with $A\in \Delta
\setminus \Delta ^{\langle j_{i}-1\rangle }$. The simplicial complex $\Delta
^{\langle j_{i}-1\rangle }$ is generated by faces of $\Delta $ of dimension
\ at least $j_{i}-1$, where $j_{1}-1<\dots <j_{s}-1$ are the dimensions of
facets of $\Delta $. We have also that $\dim (\Delta ^{\langle
j_{i}-1\rangle })=j_{i}-1$. Hence the desired filtration exists if and only
if $\Delta $ has facets of dimension $d$ and $d-1$. Ideal $I_{\Delta ^{\ast
}}$ is generated by monomials $x_{A}$ for $A\notin \Delta ^{\ast }$, that
is, for $A=V\setminus \sigma $, where $\sigma \in \Delta $. We have to take
all $x_{A}$ corresponding to facets and they all already generate ideal.
Hence the ideal $I_{\Delta ^{\ast }}$ is generated in two consecutive
degrees $v-d$ and $v-(d-1)$, where $\left\vert V\right\vert =v$. Since each
step of our reasoning was an equivalence, the contrary also holds.
\end{proof}

\section*{Acknowledgements}

I would like to thank Ralf Fr\"{o}berg for many inspiring conversations and
introduction to this subject. I would also like to thank Jarek Grytczuk for
help in preparation of this manuscript. Also I acknowledge a support from
Polish Ministry of Science and Higher Education grant MNiSW N N201 413139.

\end{document}